%
%
%
%
%
%
\RequirePackage{fix-cm,amsmath}
\documentclass[smallextended]{svjour3}       
%
%
\usepackage{graphicx}
%
%

\usepackage{fancyhdr, amssymb, mathtools, algpseudocode, algorithm, nicefrac, multirow, booktabs, comment}
\mathtoolsset{showonlyrefs} 
\usepackage[dvipsnames]{xcolor}
\usepackage[shortlabels]{enumitem}
\usepackage[letterpaper,top=2in, bottom=1.5in, left=1in, right=1in]{geometry}

%

\usepackage{tikz,pgfplots}
\pgfplotsset{compat=1.12}
\usepackage[strings]{underscore}

\newcommand{\R}{\mathbb{R}}

\newcommand{\X}{\mathcal{X}}

\renewcommand{\i}{_{i}}
\newcommand{\im}{_{i-1}}

\renewcommand{\k}{_{k}}
\newcommand{\km}{_{k-1}}
\newcommand{\kp}{_{k+1}}

\newcommand{\g}{\gamma}
\newcommand{\G}{\Gamma}
\renewcommand{\b}{\beta}
\newcommand{\gf}{\nabla f}

\newcommand\ang[1]{\left\langle#1\right\rangle}
\newcommand{\norm}[1]{\left\lVert#1\right\rVert}
\newcommand{\pr}[1]{\left(#1\right)}
\newcommand{\cbr}[1]{\left\{#1\right\}}
\newcommand{\br}[1]{\left[#1\right]}
\newcommand{\ceil}[1]{\left\lceil#1\right\rceil}

\newcommand{\MO}{\mathcal{O}}
\newcommand{\cX}{\mathcal{X}}


%
%
\begin{document}
	
	\title{Backtracking linesearch for conditional gradient sliding
	}
	
	
	\author{Hamid Nazari         \and
		Yuyuan Ouyang 
	}
	
	
	\institute{Hamid Nazari \at
		School of Mathematical and Statistical Sciences \\
		Clemson University\\
		\email{snazari@clemson.edu}           
		\and
		Yuyuan Ouyang \at
		School of Mathematical and Statistical Sciences \\
		Clemson University\\
		\email{yuyuano@clemson.edu}
	}
	
	\date{Received: date / Accepted: date}

	\maketitle
	
	\begin{abstract}
		We present a modification of the conditional gradient sliding (CGS) method that was originally developed in \cite{lan2016conditional}. While the CGS method is a theoretical breakthrough in the theory of projection-free first-order methods since it is the first that reaches the theoretical performance limit, in implementation it requires the knowledge of the Lipschitz constant of the gradient of the objective function $L$ and the number of total gradient evaluations $N$. Such requirements imposes difficulties in the actual implementation, not only because that it can be difficult to choose proper values of $L$ and $N$ that satisfies the conditions for convergence, but also since conservative choices of $L$ and $N$ can deteriorate the practical numerical performance of the CGS method. 
		Our proposed method, called the conditional gradient sliding method with linesearch (CGS-ls), does not require the knowledge of either $L$ and $N$, and is able to terminate early before the theoretically required number of iterations. While more practical in numerical implementation, the theoretical performance of our proposed CGS-ls method is still as good as that of the CGS method. 
		We present numerical experiments to show the efficiency of our proposed method in practice.
		\keywords{Convex optimization \and Conditional gradient sliding \and Backtracking linesearch}
	\end{abstract}
	

	\section{Introduction}

The problem of interest of this paper is the convex optimization problem
\begin{align}
	\label{eq:problem}
	f^*:=\min_{x\in \cX}f(x)
\end{align}
where $\cX\in\R^n$ is a convex compact set and $f:\R^n\to\R$ is a convex differentiable function such that
\begin{align}
	\label{eq:L}
	\|\nabla f(x) - \nabla f(y)\|\le L\|x - y\|,\ \forall x,y\in\cX.
\end{align}
Here $\|\cdot\|$ is the Euclidean norm. 
Our goal is to compute an $\varepsilon$-approximate solution $y$ such that $f(y) - f^*\le\varepsilon$ using first-order information, namely, the function and gradient values $f$ and $\nabla f$. 

When designing numerical methods that uses first-order information, the structures of the objective function and the compact feasible set $\cX$ have significant impact on the theoretical convergence theory and practical computing performance. As an example, consider the performance of two classical iterative numerical methods on a simple instance of problem \eqref{eq:problem} in which $\cX:=\{(x^{(1)},x^{(2)})\in\R^2|x^{(1)}+x^{(2)}=1,x^{(1)},x^{(2)}\ge 0\}$ is a line segment in $\R^2$ and $f(x)=\|x\|^2/2$. The first numerical method is the projected gradient method with iterates
\begin{align}
	x_k = y\km - (1/\beta_k)\nabla f(y\km),\ y_k=\text{Proj}_{\cX}(x_k).
\end{align}
In each iteration, the projected gradient method moves along the negative gradient direction with a pre-specified stepsize $(1/\beta_k)$ to obtain an updated iterate $x_k$. Such update $x_k$ may fall outside of the feasible region, so we maintain feasibility by projecting $x_k$ onto the feasible set to obtain a new approximate solution $y_k$. The second numerical method is the conditional gradient method \cite{frank1956algorithm}
\begin{align}
	\label{eq:cg}
	x_k = \min_{x\in\cX}\langle\nabla f(x\km),x\rangle,\ y_k = \sum_{i=1}^{k}{\lambda^i_k}x_i.
\end{align}
Here we compute an update $x_k$ from a linear optimization over the line segment $\cX$. Note that $x_k$ will always be selected from the two extreme points $(1,0)^\top$ and $(0,1)^\top$. To avoid oscillating between the extreme points, we select the new approximate solution $y_k$ to a convex combination of all previous updates $x_i$'s. Both the aforementioned methods have been extensively studied in the literature. See, e.g., the monograph \cite{bubeck2015convex} or the book \cite{lan2020first} for the survey of both methods.

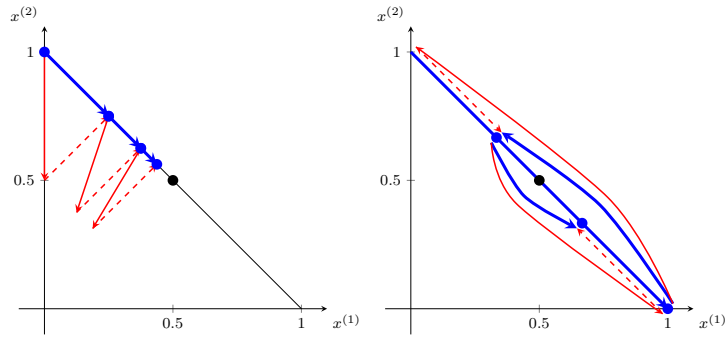
\begin{figure}[!h]
	\centering
	\scalebox{.72}{
		\begin{tikzpicture}[>=latex]
		\begin{axis}[
		unit vector ratio*=1 1 1,
		axis x line=center,
		axis y line=center,
		xtick={0,.5,1},
		ytick={0,.5,1},
		xlabel={$x^{(1)}$},
		ylabel={$x^{(2)}$},
		xlabel style={below right},
		ylabel style={above left},
		xmin=-.1,
		xmax=1.1,
		ymin=-0.1,
		ymax=1.1]
		\addplot [mark=none,domain=0:1] {1-x};
		\addplot [mark=*, ultra thick] coordinates {(.5, .5)};
		
		\addplot[color=red,-stealth, thick] coordinates {
			(0, 1) 
			(0, .5)
		};
		\addplot[color=red,-stealth, dashed, thick] coordinates {
			(0, .5)
			(.25, .75)
		};
		\addplot[color=red,-stealth, thick] coordinates {
			(.25, .75)
			(0.125, .375)
		};
		\addplot[color=red,-stealth, dashed, thick] coordinates {
			(0.125, .375)
			(.375, .625)
		};
		\addplot[color=red,-stealth, thick] coordinates {
			(.375, .625)
			(.1875, .3125)
		};
		\addplot[color=red,-stealth, dashed, thick] coordinates {
			(.1875, .3125)
			(.4375, .5625)
		};
		
		\addplot[color=blue,-stealth, ultra thick, mark=*] coordinates {
			(0, 1) 
			(.25, .75)
		};
		\addplot[color=blue,-stealth, ultra thick, mark=*] coordinates {
			(.25, .75)
			(.375, .625)
		};
		\addplot[color=blue,-stealth, ultra thick, mark=*] coordinates {
			(.375, .625)
			(.4375, .5625)
		};
		
		\end{axis}
		\end{tikzpicture}	
		\begin{tikzpicture}[>=latex]
		\begin{axis}[
		unit vector ratio*=1 1 1,
		axis x line=center,
		axis y line=center,
		xtick={0,.5,1},
		ytick={0,.5,1},
		xlabel={$x^{(1)}$},
		ylabel={$x^{(2)}$},
		xlabel style={below right},
		ylabel style={above left},
		xmin=-.1,
		xmax=1.1,
		ymin=-0.1,
		ymax=1.1]
		\addplot [mark=none,domain=0:1] {1-x};
		\addplot [mark=*, ultra thick] coordinates {(.5, .5)};
		
		\addplot[color=blue,-stealth, ultra thick, mark=none] coordinates {
			(0, 1) 
			(1, 0)
		};
		\addplot[color=blue,-stealth, ultra thick, mark=none, smooth] coordinates {
			(1.02, 0.02)
			(0.7367, 0.4033)
			(.3533, .6867)
		};
		\addplot[color=blue,-stealth, ultra thick, mark=none, smooth] coordinates {
			(.3133, .6467)
			(.44, .44)
			(.6467, 0.3133)
		};
		
		\addplot[color=red,-stealth, thick, mark=none, smooth] coordinates {
			(1.02, 0.02)
			(0.7667, 0.4333)
			(0.02, 1.02)
		};
		\addplot[color=red,dashed,-stealth, thick, mark=none] coordinates {
			(0.02, 1.02) 
			(0.3533, 0.6867)
		};
		\addplot[color=red,-stealth, thick, mark=none, smooth] coordinates {
			(.3133, .6467)
			(.41, .41)
			(.98, -.02)
		};
		\addplot[color=red,dashed,-stealth, thick, mark=none] coordinates {
			(.98, -.02)
			(0.6467, 0.3133)
		};
		
		\addplot [mark=*, ultra thick, color=blue] coordinates {(1, 0)};
		\addplot [mark=*, ultra thick, color=blue] coordinates {(.3333, .6667)};
		\addplot [mark=*, ultra thick, color=blue] coordinates {(.6667, 0.3333)};
		
		\end{axis}
		\end{tikzpicture}	
	}
	\caption{\label{fig:simple_plot} \footnotesize Left: Project gradient method iterations. Blue arrows describe update of approximate solutions, red solid arrows describe gradient descent moves, and red dashed arrows describe projections onto $\cX$. 
		Right: Conditional gradient method iterations. Blue arrows describe update of approximate solutions, red solid arrows describe the updates $x_k$ (always extreme points), and red dashed arrows describe the convex combination weighting process.
	}
\end{figure}

The performance of the projected gradient and conditional gradient methods are illustrated in Figure \ref{fig:simple_plot}. Both methods start at point $(0,1)^\top$. In the projected gradient method we select the constant stepsize $1/\beta_k\equiv 0.5$ and in the conditional gradient method we select the weights $\lambda_i = 2i/(k(k+1))$. From Figure \ref{fig:simple_plot} we can observe that approximate solutions of both methods are gradually approaching the optimal solution. The ones produced by the projected gradient method converge faster comparing to the ``back and forth'' behavior (see the blue arrows in the right plot) from the conditional gradient method; this is because the projected gradient method benefits from the structure of the objective function (i.e., smoothness and strong convexity). However, in the conditional gradient method iterations we only need to solve a linear program, while the projected gradient method needs to compute  projections onto $\cX$. If we extend the aforementioned simple example to the general case in where $\cX$ is a high dimensional general convex polytope, then solving linear programs would likely be more preferable than solving projections onto general convex polytopes. 

In fact, the above simple example reflects the difference between the projected gradient and conditional gradient methods in terms of their theoretical convergence theory. For our problem of interest \eqref{eq:problem}, the accelerated version of projected gradient method (also known as Nesterov's optimal method; see \cite{nesterov1983method,nesterov2004introductory}) is able to compute an $\varepsilon$-approximate solution with $\MO(\sqrt{L/\varepsilon})$ gradient evaluations and projections, and the conditional gradient method will need $\MO(L/\varepsilon)$ gradient evaluations and linear objective optimizations \cite{harchaoui2015conditional,jaggi2013revisiting}. Here $L$ is the Lipschitz constant defined in \eqref{eq:L}. It should be noted that both convergence properties are not improvable in the following sense: for any numerical method that uses only first-order information for solving problem \eqref{eq:problem} with large number of decision variables $n$, it requires at least $\MO(\sqrt{L/\varepsilon})$ gradient computation to compute an $\varepsilon$-solution \cite{nemirovski1992information} (see also \cite{nemirovski1994efficient,nesterov2004introductory}); for any numerical method that defines a linear objective function and solve a linear objective optimization in each iteration, it requires at least $\MO(L/\varepsilon)$ iterations to compute an $\varepsilon$-solution \cite{lan2013complexity} to problem \eqref{eq:problem} with large $n$.

The conditional gradient methods are still preferred in many practices due to its advantage of not requiring the projection computation, even though its requirement of $\MO(L/\varepsilon)$ gradient computations is not as good as the theoretical limit $\MO(\sqrt{L/\varepsilon})$. For problems with sophisticated feasible set $\cX$, the possibly expensive computational time of projection operator can significantly outweigh the theoretical advantage of the smaller $\MO(\sqrt{L/\varepsilon})$ gradient evaluation of any projection-based methods. 

Recently, there has been a breakthrough in the theory of conditional gradient methods. A condition gradient sliding (CGS) method is proposed in \cite{lan2016conditional} that is able to solve an $\varepsilon$-approximate solution of problem \eqref{eq:problem} with $\MO(\sqrt{L/\varepsilon})$ gradient evaluations and $\MO(L/\varepsilon)$ linear objective optimization subproblems. It should be pointed out that the CGS method still obeys the aforementioned theoretical performance limits of numerical methods that uses first-order information. In fact, it is the first conditional gradient-type method that reaches the theoretical performance limit. The key concept of the CGS method is to use the conditional gradient method to approximately solve the projection subproblem in the accelerated projected gradient method. With proper precision on solving the projection subproblem, the CGS method is able to reduce the number of gradient evaluations from the required $\MO(L/\varepsilon)$ of the original conditional gradient method to a improve order of $\MO(\sqrt{L/\varepsilon})$.

In the original CGS method in \cite{lan2016conditional}, the knowledge of the Lipschitz constant $L$ and the number of gradient evaluations $N$ are required for implementation\footnote{Some parameter settings of the CGS method does not require $N$; see, e.g., Corollary 2.3 in \cite{lan2016conditional}. However, since no termination criterion is proposed in \cite{lan2016conditional}, to terminate we still need to specify the total number of iterations $N$. It should be noted that we can use some termination criterion for the CGS method, e.g., the Wolfe gap, which we will use for CGS in the numerical experiments of our paper; however, the theoretical convergence property in terms of the number of iterations needed to achieve small Wolfe gap is different from the properties of the CGS method, and may deteriorate its practical performance significantly.}. Such requirements lead to two disadvantages in practice. First, in order to make sure that a constant $L$ satisfies the Lipschitz condition \eqref{eq:L}, we will need to choose a constant $L$ that satisfies the Lipschitz condition \eqref{eq:L} for all pairs $x$ and $y$ in $\cX$. Computing such $L$ can be difficult; the computed $L$ can also be too conservative and lead to worse practical performance. Second, in order to compute an $\varepsilon$-approximate solution, we need to either tune the number of gradient evaluations $N$ in practice or follow its theoretical property and specify a possibly conservative $N=\MO(L/\sqrt{\varepsilon})$. While the CGS method reaches the theoretical performance limits, such disadvantages may deteriorate its practical performance significantly. 

In this paper, we propose a modification of the CGS method that allows for its practical implementation. Our proposed method, called the CGS with linesearch (CGS-ls), performs a backtracking linesearch strategy to gradually increase the initial guess of Lipschitz constant $L_0$ to values that satisfy the convergence condition. The initial guess $L_0$ does not need to satisfy the Lipschitz condition \eqref{eq:L} and can be significantly smaller than the actual Lipschitz constant. We also maintains the estimate of a lower bound of $f^*$ that certificates the achievement of an $\varepsilon$-approximate solution. Consequently, our propose method does not require the knowledge of either $L$ and $N$ and is able to stop before the theoretical $\MO(\sqrt{L/\varepsilon})$ bound of required gradient evaluations or the $\MO(L/\varepsilon)$ bound of linear objective optimization subproblems. It should be noted that our theoretical analysis of the backtracking linesearch is non-trivial. In order to improve the practical implementation, add proper termination criterion, and maintain the same theoretical convergence properties as the CGS method, we need to modify some theoretical analysis in the original CGS results in \cite{lan2016conditional}. We demonstrate through numerical experiments the advantages of our proposed CGS-ls method in implementation.
	\section{Proposed Algorithm}

In this section, we describe our proposed conditional gradient sliding method with linesearch (CGS-ls) in Algorithm \ref{Alg:CGSLS}.

\begin{algorithm}[!ht]
	\caption{\label{Alg:CGSLS} A conditional gradient sliding algorithm with backtracking linesearch (CGS-ls)}
	\begin{algorithmic}
		\State Choose initial point $y_0\in \mathcal{X}$ and initial guess of Lipschitz constant $L_0>0$. Set $x_0=y_0$. Define function $\xi_0(x)\equiv 0$.
		\For {$k=1,2,\ldots,N$}\\
		\State Find the smallest integer $j\ge 0$ such that the estimated local Lipschitz constant $L\k=L\km\cdot 2^{j}$ satisfies
		\begin{align}
			\label{Alg:CGSLS:approx}
			f(y\k) \le & f(z\k) + \ang{\nabla f(z\k), y\k - z\k} + \frac{L\k}{2}\norm{y\k - z\k}^2 +\frac{\varepsilon}{2}\g\k,\text{ where }
			\\
			\label{Alg:CGSLS:gamma}
			\gamma\k = &\left\{\begin{array}{ll}
				1 & \text{ if }k=1 \\
				\text{Positive solution to } L_k\gamma_k^3 = \G\km(1-\g\k)& \text{ if }k\ge 2
			\end{array} \right.
			\\
			\label{eq:GammaLg3}
			\Gamma_k = & L_k\gamma_k^3
			\\
			z\k = &(1-\g\k)y\km + \g\k x\km\label{Alg:CGSLS:zk}
			\\
			x\k = &\text{CndG}(\nabla f(z_k), x_{k-1}, \beta_k, \eta_k) \label{Alg:CGSLS:xk}
			\\
			y\k = &(1-\g\k)y\km +\g\k x\k\label{Alg:CGSLS:y}
		\end{align}
		\State Terminate the loop if
		\begin{align}
			\label{Alg:CGSLS:minep}
			f(y\k) - \min_{x\in \X}\xi\k(x)\le \varepsilon\text{
				where function $\xi_k(\cdot)$ is defined by }
			\xi\k(x):=(1-\g_k)\xi\km(x)+\g_k({f(z_k)+\ang{\nabla f(z_k),x-z_k}}).
		\end{align}
		\EndFor
		\State Output approximate solution $y_k$ at the termination of the above for-loop.
		\\
		\ \\
		\noindent \textbf{procedure} $u^+=$ CndG$(g,u,\beta,\eta)$
		\begin{enumerate}
			\item Set $u_1=u$ and $t=1$.
			\item Let $v_t$ be the optimal solution for the subproblem of
			\begin{align}
				\label{Alg:CGS:linApp}
				V_{g,u,\beta}(u_t):=\max_{x\in \X}\ang{g+\beta (u_t-u),u_t-x}
			\end{align}
			\item If $V_{g,u,\beta}(u_t)\le \eta$, set $u^+=u_t$ and terminate the procedure.
			\item Set $u_{t+1}=(1-\alpha_t)u_t+\alpha_t v_t$ where
			$
			\alpha_t=\min\cbr{1,\ang{\beta (u-u_t)-g,v_t-u_t}}/({\beta\norm{v_t-u_t}^2})
			$
			\item Set $t\leftarrow t+1$ and go to step 2.
		\end{enumerate}
		\noindent \textbf{end procedure}
		
	\end{algorithmic}
\end{algorithm}

A few remarks are in place for the proposed CGS-ls algorithm. 
First, the CndG procedure is exactly the same as the one described in the CGS method in \cite{lan2016conditional}. Noting the termination criterion of the CndG procedure, we can observe that the update $x_k$ computed by the CndG procedure satisfies
\begin{align}\label{optCondition:threshold}
	\ang{\gf (z\k)+\beta\k(x\k-x\km),x\k - x} \le \eta\k,\ \forall x\in \cX,
\end{align}
where $\eta_k$ is an accuracy parameter and the $\beta_k$ is a stepsize parameter whose values will be described in the sequel. Note that when the accuracy $\eta_k\equiv 0$ (this is only the ideal case; in practice the CndG procedure will never terminate when $\eta_k$ is set to $0$), then $x_k$ is the exact optimal solution to the problem
\begin{align}
	\label{eq:subproblem}
	\min_{x\in \cX}\langle \nabla f(z_k),x\rangle + \frac{\beta_k}{2}\|x - x\km\|^2.
\end{align}
In such case, the iterates \eqref{Alg:CGSLS:zk}, \eqref{Alg:CGSLS:xk}, and \eqref{Alg:CGSLS:y} becomes the iterates for the accelerated gradient method (see, e.g., \cite{nesterov2004introductory}). Consequently, CGS-ls reduces to the accelerated gradient method with backtracking linesearch. 

Second, when $L_k\equiv L$ where $L$ satisfies condition \eqref{eq:L}, then CGS-ls reduces to a version of CGS method in \cite{lan2016conditional} with $\Gamma_k = L\gamma_k^3$. The concept behind the CGS method is to use a version of conditional gradient method to solve the possibly sophisticated projection subproblem described in \eqref{eq:subproblem}. The theoretical performance limit is achieved through proper choice of the accuracy parameter $\eta_k$.
Note that the convergence analysis of the choice of $\Gamma_k$ and $\gamma_k$ in our proposed CGS-ls method is not discussed in \cite{lan2016conditional}. In the sequel, we will show that the choice $\Gamma_k = L_k\gamma_k^3$ with a backtracking strategy for $L_k$ yields our desired convergence result. 

Third, the termination criterion \eqref{Alg:CGSLS:minep} of the CGS-ls method is based on the linear lower bound function $\xi_k(x)$. In the sequel, we will prove that $\xi_k(x)\le f(x)$ for all $x\in \cX$. Consequently, whenever the termination criterion is satisfied, we have
\begin{align}
	\label{eq:certificate}
	f(y_k) - f^* = f(y_k) - f(x^*) \le f(y_k) - \xi_k(x^*) \le f(y_k) - \min_{x\in\cX}\xi_k(x) \le\varepsilon
\end{align}
where $x^*$ is an optimal solution to problem \eqref{eq:problem}. The above relation certificates that $y_k$ is an approximate solution to problem \eqref{eq:problem}. Such certification strategy for approximate solutions is previously discussed in \cite{nemirovski2010accuracy} for convex optimization problems and implemented in accelerated gradient methods (see, e.g., \cite{nesterov2015universal}). In the sequel, we will prove that the termination criterion is satisfied with at most $\MO(\sqrt{L/\varepsilon})$ gradient evaluations. One alternative termination criterion is the Wolfe gap
\begin{align}
	\max_{x\in\cX}\langle\nabla f(y_k), y_k - x\rangle\le \varepsilon
\end{align}
which is widely employed in the literature of conditional gradient methods. When the Wolfe gap termination criterion is satisfied, we also have $f(y_k) - f^*\le \varepsilon$ due to the convexity of $f$:
\begin{align}
	\label{eq:WolfeGap}
	f(y_k) - f^* = f(y_k) - f(x^*) \le -\langle \nabla f(y_k), x^*- y_k\rangle \le \max_{x\in\cX}\langle\nabla f(y_k), y_k - x\rangle\le \varepsilon.
\end{align}
However, one can only show that the termination criterion through Wolfe gap is satisfied with significantly worse number $\MO(L/\varepsilon)$ of gradient evaluations (see, e.g., \cite{jaggi2013revisiting}). 

Fourth, it is necessary to point out the backtracking strategy we implement in the proposed CGS-ls method. During implementation, we compute the estimated $L_k$ in the following way: we start with $L_k=L\km$ and compute $\gamma_k$, $\Gamma_k$, $z_k$, $x_k$, and $y_k$ in \eqref{Alg:CGSLS:gamma}, \eqref{eq:GammaLg3}, \eqref{Alg:CGSLS:zk}, \eqref{Alg:CGSLS:xk}, and \eqref{Alg:CGSLS:y}. After the computation, we verify whether condition \eqref{Alg:CGSLS:approx} is satisfied. If not, we will multiply $L_k$ by $2$ and backtrack all the values again. Such backtracking procedure stops when the Lipschitz condition \eqref{Alg:CGSLS:approx} is satisfied. It should be noted that we can derive from the convexity of $f$ and the Lipschitz condition \eqref{eq:L} that
\begin{align}
	f(y)\le f(z) + \langle \nabla f(z), y- z\rangle + \frac{L}{2}\|y - z\|^2,\ \forall y,z\in\cX.
\end{align}
Therefore, if $L_0\ge L_{min}$, where $L_{min}$ is the smallest Lipschitz constant that satisfies the Lipschitz condition \eqref{eq:L}, then we have $L_k\equiv L_0\ge L_{min}$, and CGS-ls reduces to CGS with Lipschitz constant $L_0$ and parameter $\Gamma_k = L_0\gamma_k^3$. If $L_0<L_{min}$, it is straightforward to observe that the number of backtracking required throughout the entire iterates of the CGS-ls method is 
$\left\lceil\log({2L_{min}}/{L_0})\right\rceil$.
This is because that whenever $L_k\ge 2L_{min}$, then the condition \eqref{Alg:CGSLS:approx} is always satisfied. Summarizing the above description of $L_k$'s and accounting for both the cases $L_0\ge L_{min}$ and $L_0<L_{min}$, we have the following relation:
\begin{align}
	\label{eq:Lrelation}
	L_0\le L_1\le \cdots \le L_k\le \max\{2L_{min}, L_0\}.
\end{align}

Finally, in order to compute $\gamma_k$ when $k\ge 2$ it suffices to solve the positive root to a cubic polynomial equation $L_k\gamma_k^3=\Gamma\km(1-\gamma_k)$. It is easy to verify that
\begin{align}
	\label{eq:gamma:actual}
	\gamma_k = \sqrt[3]{\frac{\Gamma\km}{2L_k}+\frac{\Gamma\km}{L_k}\sqrt{\frac{1}{4}+\frac{\Gamma\km}{27L_k}}} + \sqrt[3]{\frac{\Gamma\km}{2L_k}-\frac{\Gamma\km}{L_k}\sqrt{\frac{1}{4}+\frac{\Gamma\km}{27L_k}}}\in(0,1),\ \forall k\ge 2.
\end{align}
is the unique positive real root we are looking for through the cubic formula. Here, to prove that $\gamma_k\in(0,1)$ for all $k\ge 2$, note that when $\Gamma\km/L_k\in (0,1)$, from the above cubic formula description of $\gamma_k$ we have $\gamma_k>0$. Also, applying $\Gamma\km/L_k\in (0,1)$ to the relation $L_k\gamma_k^3=\Gamma\km(1-\gamma_k)$ we have $1-\gamma_k> \gamma_k^3>0$. Therefore $\Gamma\km/L_k\in(0,1)$ leads to $\gamma_k\in(0,1)$. Moreover, noting from the description of $L_k$ in Algorithm \ref{Alg:CGSLS} we have $L_k\ge L\km$, hence $\gamma_k\in(0,1)$ implies that $\Gamma_k/L\kp\in(0,1)$. Therefore, applying induction we can prove that $\gamma_k\in (0,1)$ for all $k\ge 2$. Also, note that $\gamma_1=1$ in \eqref{Alg:CGSLS:gamma}. Consequently, $y_k$ in \eqref{Alg:CGSLS:y} is the convex combination of $y\km$ and $x_k$; noting that $x_k\in\cX$ and that $y_0=x_0\in\cX$ we can have that the approximation solution $y_k\in\cX$.


\section{Theoretical analysis}
In this section we perform the convergence analysis of the proposed CGS-ls algorithm in Algorithm \ref{Alg:CGSLS}. We begin with two technical results that will be used in the analysis.

\begin{lemma}\label{bounding:plusMinus}
	Suppose that $\{\lambda_i\}_{i\ge 1}$ and $ \{a_i\}_{i\ge 0}$ are two sequences of nonnegative real numbers, in which the sequence $\{\lambda_i\}_{i\ge 1}$ is non-decreasing. For any fixed $k$, we have
	\begin{align*}
		\sum_{i=1}^{k}\lambda_i(a_{i-1}-a_i) \le \lambda_k \max_{0\le t\le k}a_t.
	\end{align*}
\end{lemma}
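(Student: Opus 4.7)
The plan is to prove this by summation by parts (Abel's trick), which is the natural tool whenever one has an inner product of a monotone sequence against a sequence of consecutive differences.

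First I would expand and reindex:
\begin{align}
\sum_{i=1}^{k}\lambda_i(a_{i-1}-a_i) = \sum_{i=1}^{k}\lambda_i a_{i-1} - \sum_{i=1}^{k}\lambda_i a_i = \sum_{i=0}^{k-1}\lambda_{i+1} a_i - \sum_{i=1}^{k}\lambda_i a_i.
\end{align}
Pulling out the $i=0$ term from the first sum and the $i=k$ term from the second, the remaining indices $i=1,\ldots,k-1$ can be combined into a single telescoping-like expression:
\begin{align}
\sum_{i=1}^{k}\lambda_i(a_{i-1}-a_i) = \lambda_1 a_0 + \sum_{i=1}^{k-1}(\lambda_{i+1}-\lambda_i)\,a_i - \lambda_k a_k.
\end{align}

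Next I would use the hypotheses: since $\{\lambda_i\}$ is non-decreasing, each coefficient $\lambda_{i+1}-\lambda_i$ is nonnegative; together with $\lambda_1\ge 0$, I can upper bound each $a_i$ appearing with a nonnegative weight by $M:=\max_{0\le t\le k}a_t$. Dropping the last term $-\lambda_k a_k$ (which is nonpositive since $\lambda_k,a_k\ge 0$) gives
\begin{align}
\sum_{i=1}^{k}\lambda_i(a_{i-1}-a_i) \le \Bigl[\lambda_1 + \sum_{i=1}^{k-1}(\lambda_{i+1}-\lambda_i)\Bigr] M = \lambda_k M,
\end{align}
where the bracketed sum telescopes to $\lambda_k$. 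This is exactly the claim.

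There is no real obstacle here: the only thing to be slightly careful about is the boundary terms in the reindexing and the sign of the dropped $-\lambda_k a_k$ term, both of which use nonnegativity of $\lambda_k$ and $a_k$. The monotonicity of $\{\lambda_i\}$ is used in exactly one place, namely to guarantee that the coefficients $\lambda_{i+1}-\lambda_i$ are nonnegative so that replacing $a_i$ with its maximum gives a valid upper bound rather than a lower bound.
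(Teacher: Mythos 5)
Your proof is correct and is essentially identical to the paper's: both apply Abel summation to rewrite the sum as $\lambda_1 a_0 + \sum(\lambda_{i+1}-\lambda_i)a_i - \lambda_k a_k$, drop the nonpositive last term, bound each remaining $a_i$ by the maximum, and telescope the coefficients to $\lambda_k$. No differences worth noting.
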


\begin{proof}
	%
	Since $\lambda_i \ge \lambda\im\ge 0$ for all $i=2,3,\ldots$, we have immediately that
	\begin{align*}
		\sum_{i=1}^{k}\lambda_i(a_{i-1}-a_i) &= \lambda_1 a_0 + \sum_{i=2}^{k}(\lambda_i - \lambda_{i-1})a_{i-1} - \lambda_k a_k \le \lambda_1\max_{0\le t\le k}a_t + \sum_{i=2}^{k}(\lambda_i - \lambda_{i-1})\max_{0\le t\le k}a_t= \lambda_k \max_{0\le t\le k}a_t.
	\end{align*}
\end{proof}

%
\begin{lemma}
	\label{lem:Sum}
	In Algorithm \ref{Alg:CGSLS}, suppose that $\gamma_1=1$, $\gamma_k\in (0,1)$, $k=2,3,\ldots$, and the value of $\Gamma_k$ satisfies
	\begin{align}
		\label{def:Gamma}
		\G\k:=\left\{\begin{tabular}{ll}
			$1$, & $k=1$,\\
			$\G\km(1-\g\k),$ &$k\ge 2$.
		\end{tabular}\right.
	\end{align}
	If the sequence $\{\delta_k\}_{k\ge 1}$ satisfies
	\begin{align}
		\label{eqDeltaCond}
		\delta_k\le (1-\gamma_k) \delta\km + B_k,\ k=1,2,\ldots,
	\end{align}
	then for any $k\ge 1$ we have
	\begin{align}
		\label{eqDeltaBound}
		\delta_k\le \Gamma_k\sum_{i=1}^k\frac{B_i}{\Gamma_i}.
	\end{align}
	In particular, the above inequality becomes equality when the relations in \eqref{eqDeltaCond} are all equality relations.
\end{lemma}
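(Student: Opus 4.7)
The plan is to reduce the recursion to a telescoping form by dividing through by $\Gamma_k$, and then argue by induction on $k$. The starting observation is the algebraic identity
\begin{align}
    \frac{1-\gamma_k}{\Gamma_k} = \frac{1}{\Gamma_{k-1}}, \quad k\ge 2,
\end{align}
which is immediate from the defining relation $\Gamma_k = \Gamma_{k-1}(1-\gamma_k)$ in \eqref{def:Gamma}. Applying this to \eqref{eqDeltaCond} after division by $\Gamma_k$ gives, for $k\ge 2$,
\begin{align}
    \frac{\delta_k}{\Gamma_k} \le \frac{\delta_{k-1}}{\Gamma_{k-1}} + \frac{B_k}{\Gamma_k},
\end{align}
so the quantities $\delta_k/\Gamma_k$ obey a clean additive bound that is ready to telescope.

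For the base case $k=1$, note that $\gamma_1=1$ kills the $\delta_0$ term in \eqref{eqDeltaCond}, and since $\Gamma_1=1$ we obtain $\delta_1\le B_1 = \Gamma_1 (B_1/\Gamma_1)$, matching \eqref{eqDeltaBound}. For the inductive step I would assume the bound at index $k-1$, multiply by $(1-\gamma_k)$, use $\Gamma_k = (1-\gamma_k)\Gamma_{k-1}$ to rewrite the right-hand side, and then add $B_k$, which yields exactly
\begin{align}
    \delta_k \le \Gamma_k \sum_{i=1}^{k-1}\frac{B_i}{\Gamma_i} + B_k = \Gamma_k\sum_{i=1}^{k}\frac{B_i}{\Gamma_i}.
\end{align}
Equivalently, one can skip the induction and simply sum the telescoped inequality from $i=2$ to $k$, combined with the base case at $i=1$.

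For the claim that equality holds in \eqref{eqDeltaBound} when every instance of \eqref{eqDeltaCond} is an equality, I would just remark that each manipulation above (dividing by $\Gamma_k>0$, substituting the identity for $(1-\gamma_k)/\Gamma_k$, summing, multiplying back by $\Gamma_k$) preserves equality, so no additional argument is needed. There is no real obstacle here; the only point requiring care is making sure $\Gamma_k>0$ to justify the division, which follows from $\Gamma_1=1$, the assumption $\gamma_k\in(0,1)$ for $k\ge 2$, and induction via \eqref{def:Gamma}.
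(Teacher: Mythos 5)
Your proof is correct and follows essentially the same route as the paper, which simply says the result follows from dividing \eqref{eqDeltaCond} by $\Gamma_k$ and summing the telescoped relations; your version just spells out the identity $(1-\gamma_k)/\Gamma_k = 1/\Gamma_{k-1}$, the base case via $\gamma_1=1$, $\Gamma_1=1$, and the positivity of $\Gamma_k$. Nothing further is needed.
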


\begin{proof}
	The result follows from dividing both sides of \eqref{eqDeltaCond} by $\Gamma_k$ and then summing up the resulting inequalities or equalities.
\end{proof}

A few remarks are in place regarding the above lemma. First, by the descriptions of $\gamma_k$ and $\Gamma_k$ in Algorithm \ref{Alg:CGSLS}, the condition \eqref{def:Gamma} is clearly satisfied. Second, applying the above lemma with $\delta_k\equiv 1$ and $B_k=\gamma_k$ we have the following equality:
\begin{align}
	\label{eq:sumone}
	\Gamma_k\sum_{i=1}^k\frac{\gamma_i}{\Gamma_i} = 1.
\end{align}
Similarly, applying the above lemma to the definition of the lower bound function $\xi_k(\cdot)$ in \eqref{Alg:CGSLS:minep} (with $\delta_k=\xi_k(x)$ and $B_k=\g_k({f(z_k)+\ang{\nabla f(z_k),x-z_k}})$) we also have
\begin{align}
	\label{eq:xi:sum}
	\xi_k(x) = \Gamma_k\sum_{i=1}^{k}\frac{\gamma_i}{\Gamma_i}({f(z_i)+\ang{\nabla f(z_i),x-z_i}}).
\end{align}

%

We are now ready to analyze the convergence of the proposed CGS-ls algorithm. Theorem \ref{CGSLS:mainthm} below describes the main convergence property of  Algorithm \ref{Alg:CGSLS}. 

\begin{theorem}
	\label{CGSLS:mainthm}
	Suppose that the parameters in Algorithm \ref{Alg:CGSLS} satisfy $\beta\k\ge L\k\g\k$ for all $k$. Then we have
	\begin{align*}
		f(y\k) - \xi\k(x) &\le \frac{\varepsilon}{2} + \G\k\sum_{i=1}^{k}\frac{\g_i\beta_i}{2\G_i}\pr{\norm{x-x_{i-1}}^2 - \norm{x-x_i}^2} +\G\k\sum_{i=1}^{k}\frac{\g_i\eta_i}{\G_i} 
		,\ \forall x\in\cX.
	\end{align*}
\end{theorem}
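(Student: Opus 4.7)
The plan is to derive a one-step recursive bound of the form $\delta\k \le (1-\g\k)\delta\km + B\k$ with $\delta\k := f(y\k) - \xi\k(x)$, and then to invoke Lemma \ref{lem:Sum} to obtain the summed bound. The remaining $\varepsilon/2$ term will come from the identity $\Gamma\k\sum_{i=1}^k \g_i/\G_i = 1$ in \eqref{eq:sumone}.

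First I would feed the descent-like inequality \eqref{Alg:CGSLS:approx} with the identity $y\k - z\k = \g\k(x\k - x\km)$, which is immediate from \eqref{Alg:CGSLS:zk} and \eqref{Alg:CGSLS:y}, to get
\begin{align*}
f(y\k) \le f(z\k) + \g\k\ang{\gf(z\k), x\k - x\km} + \tfrac{L\k\g\k^2}{2}\norm{x\k - x\km}^2 + \tfrac{\varepsilon}{2}\g\k.
\end{align*}
The stepsize condition $\b\k\ge L\k\g\k$ replaces $L\k\g\k^2$ by $\g\k\b\k$ in the quadratic term. Next, for a fixed $x\in\cX$, I would split $\g\k\ang{\gf(z\k), x\k - x\km} = \g\k\ang{\gf(z\k), x - x\km} + \g\k\ang{\gf(z\k), x\k - x\rangle$ and handle the two pieces separately. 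For the first, convexity of $f$ applied at $y\km$ gives $(1-\g\k)f(y\km) \ge (1-\g\k)[f(z\k) + \ang{\gf(z\k), y\km - z\k}]$; combining this with the relation $(1-\g\k)(y\km - z\k) = -\g\k(x\km - z\k)$ that follows from \eqref{Alg:CGSLS:zk} yields
\begin{align*}
f(z\k) + \g\k\ang{\gf(z\k), x - x\km} \le (1-\g\k)f(y\km) + \g\k\br{f(z\k)+\ang{\gf(z\k), x - z\k}}.
\end{align*}
For the second, the CndG termination condition \eqref{optCondition:threshold} gives $\g\k\ang{\gf(z\k), x\k - x} \le \g\k\eta\k - \g\k\b\k\ang{x\k - x\km, x\k - x}$, and the three-point identity $2\ang{a-b,a-c} = \norm{a-b}^2 + \norm{a-c}^2 - \norm{b-c}^2$ converts this into the telescoping pair $\frac{\g\k\b\k}{2}(\norm{x - x\km}^2 - \norm{x - x\k}^2)$ plus a residual $-\frac{\g\k\b\k}{2}\norm{x\k - x\km}^2$ that exactly cancels the quadratic term from the first step.

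Collecting everything and subtracting the recursive definition $\xi\k(x) = (1-\g\k)\xi\km(x) + \g\k[f(z\k)+\ang{\gf(z\k),x-z\k}]$ from both sides, I arrive at
\begin{align*}
f(y\k) - \xi\k(x) \le (1-\g\k)(f(y\km) - \xi\km(x)) + \g\k\eta\k + \tfrac{\g\k\b\k}{2}(\norm{x-x\km}^2 - \norm{x-x\k}^2) + \tfrac{\varepsilon}{2}\g\k.
\end{align*}
Lemma \ref{lem:Sum} applied with $\delta\k = f(y\k) - \xi\k(x)$ and $B\k$ the sum of the last three terms then produces $f(y\k) - \xi\k(x) \le \G\k\sum_{i=1}^k B_i/\G_i$; the contribution of $\frac{\varepsilon}{2}\g_i$ collapses to $\varepsilon/2$ by \eqref{eq:sumone}, giving precisely the stated bound.

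The main obstacle is the convexity manipulation that replaces $f(z\k) + \g\k\ang{\gf(z\k), x - x\km}$ by a combination of $f(y\km)$ and the linear lower bound at $z\k$ — it is easy to misidentify the convex combination since $x\km$ (the CndG anchor) is distinct from the convex-combination partner $y\km$ of $z\k$. Once that step is lined up correctly, the cancellation of $\norm{x\k - x\km}^2$ between the smoothness term and the three-point identity (enabled precisely by $\b\k \ge L\k\g\k$) falls out cleanly, and the rest is bookkeeping through Lemma \ref{lem:Sum}.
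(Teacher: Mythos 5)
Your proposal is correct and follows essentially the same route as the paper's proof: the same three ingredients (the inequality \eqref{Alg:CGSLS:approx} combined with $y_k-z_k=\gamma_k(x_k-x_{k-1})$, convexity of $f$ at $y_{k-1}$, and the CndG termination condition \eqref{optCondition:threshold} converted via the three-point identity, with $\beta_k\ge L_k\gamma_k$ cancelling the $\|x_k-x_{k-1}\|^2$ terms) appear in the same roles. The only cosmetic difference is that you package the argument as a one-step recursion fed into Lemma \ref{lem:Sum}, whereas the paper writes out the resulting telescoped sum directly from \eqref{eq:xi:sum}; these are the same computation.
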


\begin{proof}
	Let us fix any $x\in \cX$. In order to prove the result, we will estimate a lower bound of $\xi_k(x)$. From the description of $y_k$ in \eqref{Alg:CGSLS:y} we observe that
	$
	\gamma_k(x_k - z_k) = (y_k- z_k) - (1-\gamma_k)(y\km - z_k)
	$.
	Applying such observation to the description of $\xi_k(x)$ in \eqref{eq:xi:sum} we have
	\begin{align*}
		\frac{1}{\Gamma_k}\xi\k(x) &
		= \sum_{i=1}^{k}\frac{1}{\G\i}\br{\g\i f(z\i)+\g\i\ang{\gf (z_i),x-x_i} + \ang{\gf (z\i),\g\i(x\i-z\i) }}.
		\\
		&= \sum_{i=1}^{k}\frac{1}{\G\i}[ f(z\i)+ \ang{\gf (z_i),y\i-z\i} - (1-\g\i)\pr{f(z\i)+ \ang{\gf (z\i),y\im - z\i }}+\g\i\ang{\gf (z\i),x-x\i}].
	\end{align*}
	We make three observations in the above equation. First, by \eqref{Alg:CGSLS:approx} we have
	\begin{align*}
		f(z\i)+\ang{\gf (z\i),y\i-z\i} &\ge f(y\i)-\frac{L\i}{2}\norm{y\i-z\i}^2 - \frac{\varepsilon}{2}\g\i= f(y\i) - \frac{L\i\g\i^2}{2}\norm{x\i-x\im}^2 - \frac{\varepsilon}{2}\g\i.
	\end{align*}
	Here the last equality is from the descriptions of $z_k$ and $y_k$ in \eqref{Alg:CGSLS:zk} and \eqref{Alg:CGSLS:y} respectively.
	Second, by the convexity of $f$ we have
	\begin{align*}
		-\pr{f(z\i)+\ang{\gf (z\i),y\im-z\i}}\ge -f(y\im).
	\end{align*}
	Third, by the stopping criterion of the CndG procedure in \eqref{optCondition:threshold} and our assumption that $\beta\k\ge L\k\g\k$ for all $k$, we have
	\begin{align*}
		\g\i\ang{\gf (z\i),x-x\i}\ge\ & \gamma_i\beta_i\langle x_i - x\im, x_i - x\rangle - \gamma_i\eta_i
		\\
		= &\ -\frac{\g\i\beta\i}{2}\pr{\norm{x-x\im}^2 - \norm{x\i-x\im}^2 - \norm{x-x\i}^2}-\g\i\eta\i
		\\
		\ge & -\frac{\g\i\beta\i}{2}\pr{\norm{x-x\im}^2 - \norm{x-x\i}^2}-\g\i\eta\i + \frac{L_i\gamma_i^2}{2}\|x_i - x\im\|^2.
	\end{align*}
	Applying the above three observations and recalling that $\gamma_1=1$ and $\gamma_k\in(0,1)$ for all $k\ge 2$ in \eqref{Alg:CGSLS:gamma} and \eqref{eq:gamma:actual} respectively, we obtain that
	\begin{align*}
		\frac{1}{\Gamma_k}\xi\k(x)&\ge \sum_{i=1}^{k}\frac{1}{\G\i}\left[f(y\i)  -(1-\g\i)f(y\im) -\frac{\g\i\beta\i}{2}\pr{\norm{x-x\im}^2 -  \norm{x-x\i}^2} - \frac{\varepsilon}{2}\g\i- \g\i\eta\i\right].
	\end{align*}
	In the above result, noting from the relation \eqref{def:Gamma} between $\gamma_k$ and $\Gamma_k$ and the fact that $\gamma_1=1$, we have
	\begin{align*}
		\sum_{i=1}^{k}\frac{1}{\G\i}f(y\i)-\frac{1-\g\i}{\G\i}f(y\im) =\frac{f(y\k)}{\G\k}.
	\end{align*}
	We conclude the theorem immediately by combining the above two equations and using the relation \eqref{eq:sumone}.
\end{proof}

\begin{corollary}\label{CGSLS:cor:parSetUp3}
	Suppose that the parameters of Algorithm \ref{Alg:CGSLS} are set to
	\begin{align}\label{CGSls:params}
		\b\k =L\k\g\k\text{ and } \eta\k = \frac{L\k\g\k D^2}{k},
	\end{align}
	where $D$ is any constant that estimates the diameter 
	$D_{\cX}:=\max_{x,y\in\cX}\|x-y\|$
	of $\cX$. 
	Then Algorithm \ref{Alg:CGSLS} terminates with an $\varepsilon$-approximate solution $y_k$ after $k\ge N_{grad}$ gradient evaluations, in which
	\begin{align}
		\label{eq:Ngrad}
		N_{grad}:=
		C\sqrt{\frac{\max\{2L_{min},L_0\}D_{\cX}^2}{\varepsilon}}
		, \text{ where }\sqrt{\frac{27}{2}  + \frac{27D^2}{D_{\cX}^2}}\sqrt[6]{\frac{\max\{2L_{min},L_0\}}{L_0}}.
	\end{align}
	At termination, the total number of linear objective optimization (the problem in \eqref{Alg:CGS:linApp}) is bounded by 
	\begin{align}
		\label{eq:Nlin}
		N_{lin}:=\frac{6D_{\cX}^2}{D^2}\frac{C^2\max\{2L_{min},L_0\}D_{\cX}}{\varepsilon} + C\sqrt{\frac{\max\{2L_{min},L_0\}D_{\cX}^2}{\varepsilon}}.
	\end{align}
	Here $L_{min}$ is the smallest Lipschitz constant that satisfies the Lipschitz condition \eqref{eq:L} of the gradient $\nabla f$.
\end{corollary}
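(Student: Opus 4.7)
The plan is to apply Theorem \ref{CGSLS:mainthm} at $x = \argmin_{x\in\cX}\xi_k(x)$ and drive the right-hand side below $\varepsilon$. The hypothesis $\beta_k \ge L_k\gamma_k$ holds with equality under the stated choice. Because $\Gamma_i = L_i\gamma_i^3$, the two coefficients appearing in the theorem simplify nicely: $\gamma_i\beta_i/(2\Gamma_i) = 1/(2\gamma_i)$ and $\gamma_i\eta_i/\Gamma_i = D^2/(i\gamma_i)$. The discussion in Section 2 shows that $\{\gamma_i\}$ is non-increasing, so $\{1/\gamma_i\}$ is non-decreasing and Lemma \ref{bounding:plusMinus} collapses the first sum to $\Gamma_k D_\cX^2/(2\gamma_k) = L_k\gamma_k^2 D_\cX^2/2$. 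It then remains to bound $L_k\gamma_k^2$ and $\Gamma_k\sum_{i=1}^k 1/(i\gamma_i)$ explicitly in terms of $k$, $L_{max}:=\max\{2L_{min},L_0\}$, and $L_0$.

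The central technical step is the one-sided inequality $\gamma_k \le 3/(k+2)$ for every $k\ge 1$. Combining $\Gamma_k = \Gamma_{k-1}(1-\gamma_k)$ with $L_k\ge L_{k-1}$ gives $\gamma_k^3 \le \gamma_{k-1}^3(1-\gamma_k)$, and a short induction whose crux is the algebraic identity $(k+1)^3 - (k-1)(k+2)^2 = 3k+5 > 0$ rules out $\gamma_k > 3/(k+2)$. This yields $L_k\gamma_k^2 = L_k^{1/3}\Gamma_k^{2/3} \le 9L_{max}/(k+2)^2$ and $\Gamma_k \le 27L_{max}/(k+2)^3$. For the matching lower bound I would compare $\{\gamma_k\}$ with the auxiliary recursion $\bar\gamma_k^3 = (L_0/L_{max})\bar\gamma_{k-1}^3(1-\bar\gamma_k)$, $\bar\gamma_1 = 1$: monotonicity of $\gamma\mapsto \gamma^3/(1-\gamma)$ together with $L_{k-1}/L_k \ge L_0/L_{max}$ gives $\gamma_k \ge \bar\gamma_k$, and a parallel contradiction argument shows $\bar\gamma_k \gtrsim 3(L_0/L_{max})^{1/3}/(k+2)$. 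Hence $\sum_{i=1}^k 1/(i\gamma_i) \lesssim k(L_{max}/L_0)^{1/3}$ and $\Gamma_k \sum_{i=1}^k 1/(i\gamma_i) \lesssim 27 L_{max}D^2(L_{max}/L_0)^{1/3}/(k+2)^2$.

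Substituting the two estimates into Theorem \ref{CGSLS:mainthm} produces
\[ f(y_k) - \min_{x\in\cX}\xi_k(x) \le \tfrac{\varepsilon}{2} + \frac{L_{max}D_\cX^2}{(k+2)^2}\left(\tfrac{9}{2} + 27\tfrac{D^2}{D_\cX^2}(L_{max}/L_0)^{1/3}\right), \]
and the right-hand side falls below $\varepsilon$ precisely when $k\ge N_{grad}$ with the constant $C$ of the stated form; the $\sqrt[6]{L_{max}/L_0}$ factor is inherited from the lower bound on $\gamma_i$. A one-line induction using convexity of $f$ and the recursive definition of $\xi_k$ gives $\xi_k(x) \le f(x)$ on $\cX$, hence at termination $f(y_k) - f^\star \le f(y_k) - \min_x \xi_k(x) \le \varepsilon$, so $y_k$ is $\varepsilon$-approximate.

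For the count of linear optimizations I would invoke the classical conditional-gradient guarantee for the strongly convex quadratic subproblem implicit in CndG, exactly as in \cite{lan2016conditional}: the procedure terminates in at most $T_k \le \lceil 6\beta_k D_\cX^2/\eta_k\rceil = \lceil 6k D_\cX^2/D^2\rceil$ inner iterations at outer step $k$. Summing from $k=1$ to $N_{grad}$ and substituting $N_{grad}^2 = C^2 L_{max}D_\cX^2/\varepsilon$ produces the claimed $N_{lin}$. The hard part of the argument is the lower bound on $\gamma_i$: because $L_k$ may jump by an arbitrary power of two between iterations while the original CGS analysis assumes $L_k$ constant, the auxiliary-sequence comparison demands a careful monotonicity analysis of the perturbed cubic $\gamma^3 + \alpha\gamma - \alpha = 0$ with $\alpha = (L_0/L_{max})\bar\gamma_{k-1}^3$, where the term balance is more delicate than in the constant-$L$ case.
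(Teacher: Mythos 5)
Your overall architecture matches the paper's: apply Theorem \ref{CGSLS:mainthm} with the stated parameters, simplify the coefficients to $1/(2\gamma_i)$ and $D^2/(i\gamma_i)$, collapse the telescoping sum with Lemma \ref{bounding:plusMinus}, establish two-sided $\Theta(1/k)$ bounds on $\gamma_k$, and count inner iterations via Theorem 2.2(c) of \cite{lan2016conditional}. Your upper bound $\gamma_k\le 3/(k+2)$ by induction is a valid (and somewhat different) route to the estimate $\Gamma_k\le 27\max\{2L_{min},L_0\}/k^3$ that the paper obtains by telescoping $\Gamma_k^{-1/3}$.

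However, your lower bound on $\gamma_k$ has a genuine gap, and it is exactly the step you flag as delicate. The auxiliary sequence $\bar\gamma_k^3=(L_0/L_{max})\,\bar\gamma_{k-1}^3(1-\bar\gamma_k)$ applies the worst-case ratio $L_{k-1}/L_k\ge L_0/L_{max}$ at \emph{every} step, so the penalty compounds: the recursion forces $\bar\gamma_k\le (L_0/L_{max})^{1/3}\bar\gamma_{k-1}$, hence $\bar\gamma_k\le (L_0/L_{max})^{(k-1)/3}$ decays geometrically whenever $L_0<L_{max}$. Your claimed estimate $\bar\gamma_k\gtrsim 3(L_0/L_{max})^{1/3}/(k+2)$ is therefore false for this sequence (e.g.\ with $L_0/L_{max}=10^{-3}$ one gets $\bar\gamma_k\approx 10^{-(k-1)}$), and the comparison $\gamma_k\ge\bar\gamma_k$, while correct, yields nothing useful for bounding $\sum_i 1/(i\gamma_i)$. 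The point the argument must exploit is that the total growth of $L_k$ over all iterations is bounded by $L_{max}/L_0$, i.e.\ the ratios $L_{k-1}/L_k$ telescope rather than compound. The paper achieves this by writing
\begin{align}
\sqrt[3]{1/\Gamma_k}-\sqrt[3]{1/\Gamma_{k-1}}
=\frac{\gamma_k/\Gamma_k}{\sqrt[3]{1/\Gamma_k^2}+\sqrt[3]{1/(\Gamma_k\Gamma_{k-1})}+\sqrt[3]{1/\Gamma_{k-1}^2}}
\in\left[\tfrac{1}{3}L_k^{-1/3},\,L_k^{-1/3}\right]\subseteq\left[\tfrac{1}{3}L_{max}^{-1/3},\,L_0^{-1/3}\right],
\end{align}
so that summing gives $L_0/k^3\le\Gamma_k\le 27L_{max}/k^3$ and then $\gamma_k=(\Gamma_k/L_k)^{1/3}\ge k^{-1}(L_0/L_{max})^{1/3}$, where the factor $(L_0/L_{max})^{1/3}$ appears once rather than once per iteration. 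Replacing your auxiliary-sequence comparison with this telescoping (or any argument that only charges the cumulative ratio $L_1/L_k$) closes the gap; the remainder of your proof, including the $\xi_k\le f$ certification and the $N_{lin}$ count, then goes through as in the paper.
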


\begin{proof}
	Applying Theorem \ref{CGSLS:mainthm} with the parameters described in \eqref{CGSls:params}, and noting from the description of $\gamma_k$ and $\Gamma_k$ in Algorithm \ref{Alg:CGSLS} that $\G\k=L\k\g\k^3 = (1-\g\k)\G\km$, we have
	\begin{align}
		f(y_k) - \xi_k(x) \le \frac{\varepsilon}{2} + \G\k\sum_{i=1}^{k}\frac{1}{2\g_i}\pr{\norm{x-x_{i-1}}^2 - \norm{x-x_i}^2} +\G\k\sum_{i=1}^{k}\frac{D^2}{i\g_i},\ \forall x\in \cX
	\end{align}
	Since $\gamma_k\in(0,1)$ for all $k\ge 2$ (see \eqref{eq:gamma:actual}), we observe that $L_k\gamma_k^3 = \G\k= (1-\g\k)\G\km<\G\km = L\km\gamma\km^3$ for all $k\ge 2$. using this observation and noting from \eqref{eq:Lrelation} that $L_k\ge L\km$, we have $\gamma_k^3<\gamma\km^3$. Consequently, the sequence $\{1/\gamma_i\}_{i\ge 1}$ at the right hand side of the above estimate of $f(y_k)-\xi_k(x)$ is an increasing sequence. Applying Lemma \ref{bounding:plusMinus} we have
	\begin{align}
		\label{eq:est:fxidiff}
		f(y_k) - \xi_k(x) \le \frac{\varepsilon}{2} + \frac{\G\k}{2\g_k}\max_{0\le i\le k}\norm{x-x_i}^2 +\G\k\sum_{i=1}^{k}\frac{D^2}{i\g_i}\le  \frac{\varepsilon}{2} + \frac{\G\k}{2\g_k}D_{\cX}^2 +\G\k\sum_{i=1}^{k}\frac{D^2}{i\g_i}.
	\end{align}
	Here in the last inequality we use the definition of diameter $D_{\mathcal{X}}$. We will estimate the right most side of the above relation.
	
	Using the relation $\G\k=L\k\g\k^3 = (1-\g\k)\G\km$ again, we have
	\begin{align*}
		\sqrt[3]{\frac{1}{\G\k}} - \sqrt[3]{\frac{1}{\G\km}} &= \frac{\frac{1}{\G\k}-\frac{1}{\G\km}}{\sqrt[3]{\frac{1}{\G\k^2}}+\sqrt[3]{\frac{1}{\G\k\G\km}}+\sqrt[3]{\frac{1}{\G\km^2}}} = \frac{\frac{\g\k}{\G\k}}{\sqrt[3]{\frac{1}{\G\k^2}}+\sqrt[3]{\frac{1}{\G\k\G\km}}+\sqrt[3]{\frac{1}{\G\km^2}}}.
	\end{align*}
	Noting that $\gamma_k\in(0,1)$ (see \eqref{eq:gamma:actual}) and recalling that $\G\k=(1-\g\k)\G\km$ we have $\Gamma_k\le \Gamma\km$. Therefore, we have
	\begin{align}
		\sqrt[3]{\frac{1}{\G\k^2}}\le \sqrt[3]{\frac{1}{\G\k^2}}+\sqrt[3]{\frac{1}{\G\k\G\km}}+\sqrt[3]{\frac{1}{\G\km^2}}\le 3\sqrt[3]{\frac{1}{\G\k^2}}
	\end{align}
	Recalling that $\G\k=L\k\g\k^3$, the above two relations imply that
	\begin{align}
		\frac{1}{\sqrt[3]{L_k}}\ge \sqrt[3]{\frac{1}{\G\k}} - \sqrt[3]{\frac{1}{\G\km}} \ge \frac{1}{3\sqrt[3]{L_k}}.
	\end{align}
	Here, recalling the relations of $L_k$'s in \eqref{eq:Lrelation}, we have
	\begin{align}
		\frac{1}{\sqrt[3]{L_0}}\ge \sqrt[3]{\frac{1}{\G\k}} - \sqrt[3]{\frac{1}{\G\km}} \ge \frac{1}{3\sqrt[3]{\max\{2L_{min},L_0\}}}.
	\end{align}
	Summing the above relation from $1$ to $k$ we obtain that
	\begin{align}
		\frac{k-1}{\sqrt[3]{L_0}}\ge \frac{1}{\sqrt[3]{\G\k}} - \frac{1}{\sqrt[3]{\G_1}}\ge \frac{k-1}{3\sqrt[3]{\max\{2L_{min},L_0\}}}.
	\end{align}
	Recalling from \eqref{eq:Lrelation} that $L_0\le L_1\le \max\{2L_{min},L_0\}$ and noting from \eqref{Alg:CGSLS:gamma} and \eqref{eq:GammaLg3} that $\Gamma_1=L_1$, the above becomes 
	\begin{align}
		\frac{k}{\sqrt[3]{L_0}}\ge \frac{1}{\sqrt[3]{\G\k}} 
		\ge \frac{k-1}{3\sqrt[3]{\max\{2L_{min},L_0\}}} + \frac{1}{\sqrt[3]{\max\{2L_{min},L_0\}}} > \frac{k}{3\sqrt[3]{\max\{2L_{min},L_0\}}},
	\end{align}
	i.e.,
	\begin{align}\label{CGSLS:thm:qubGam:leq}
		\frac{L_0}{k^3}\le \G\k \le \frac{27 \max\{2L_{min},L_0\}}{k^3}.
	\end{align}
	Using the first inequality above and recalling the relations $\Gamma_k = L_k\gamma_k^3$ and $L_0\le L_k$ we have
	\begin{align}\label{CGSLS:thm:qubGam:geq}
		\gamma_k \ge \sqrt[3]{\frac{L_0}{L_kk^3}} \ge	\frac{1}{k}\sqrt[3]{\frac{L_0}{\max\{2L_{min},L_0\}}}.
	\end{align}
	Applying the above two results to \eqref{eq:est:fxidiff}, we conclude that
	\begin{align}
		& f(y_k) - \xi_k(x) 
		\\
		\le & \frac{\varepsilon}{2} + \frac{27\max\{2L_{min},L_0\}D_{\cX}^2}{2k^2}\cdot\sqrt[3]{\frac{\max\{2L_{min},L_0\}}{L_0}}  + \frac{27\max\{2L_{min},L_0\}D^2}{k^3}\sum_{i=1}^{k}\sqrt[3]{\frac{\max\{2L_{min},L_0\}}{L_0}}
		\\
		= & \frac{\varepsilon}{2} + \frac{\max\{2L_{min},L_0\}D_{\cX}^2}{k^2}\left[\frac{27}{2}  + \frac{27D^2}{D_{\cX}^2}\right]\sqrt[3]{\frac{\max\{2L_{min},L_0\}}{L_0}},\ \forall x\in \cX.
	\end{align}
	
	Noting the above result and \eqref{eq:certificate}, we conclude that the proposed CGS-ls algorithm will terminate with an $\varepsilon$-approximate solution after $k\ge N_{grad}$ iterations, where $N_{grad}$ is defined in \eqref{eq:Ngrad}. Also, by Theorem 2.2(c) in \cite{lan2016conditional} and our parameter setting \eqref{CGSls:params}, the total number of linear objective optimization that is performed in the $k$-th call to the CndG procedure is bounded by
	\begin{align}
		T_k:=\ceil{\frac{6\beta_k D_{\mathcal{X}}^2}{\eta_k}} = \ceil{\frac{6 D_{\mathcal{X}}^2}{D^2}k}
	\end{align}
	Therefore, at termination the total number of linear objective optimization that is performed by Algorithm \ref{Alg:CGSLS} is bounded by
	\begin{align}
		N_{lin}:=\sum_{i=1}^{N_{grad}}T_i \le \sum_{i=1}^{N_{grad}}T_i\left(\frac{6 D_{\mathcal{X}}^2}{D^2}i+1\right) = \frac{6 D_{\mathcal{X}}^2}{D^2}N_{grad}^2 + N_{grad}.
	\end{align}
	Substituting the value of $N_{grad}$ in \eqref{eq:Ngrad} we obtain \eqref{eq:Nlin}.
\end{proof}


A few remarks are in place for the above corollary. First, from \eqref{eq:Ngrad} and \eqref{eq:Nlin} we conclude that the proposed CGS-ls method has the same theoretical convergence property as that of the CGS method in \cite{lan2016conditional}. Specifically, to compute an $\varepsilon$-approximate solution, the CGS-ls method reaches the theoretical performance limit by requiring at most $\MO(\sqrt{L/\varepsilon})$ gradient evaluations and $\MO({L/\varepsilon})$ linear objective optimizations, where $L:=\max\{2L_{min},L_0\}$ is a Lipschitz constant that satisfies the Lipschitz condition \eqref{eq:L}. Second, in our parameter setting \eqref{CGSls:params} we need to choose an estimate $D$ for the exact diameter $D_{\cX}$. 
However, as long as $D$ is relatively close to $D_{\cX}$ (e.g., smaller or larger than $D_{\cX}$ but within an order 
of $\MO({1/\sqrt{\varepsilon}})$), our convergence properties will not be affected. Finally, when the initial guess of Lipschitz constant $L_0$ is larger than $L_{min}$, no backtracking linesearch will be performed, and the proposed CGS-ls method becomes a version of the CGS method in \cite{lan2016conditional} that is equipped with a proper termination criterion. However, unlike the CGS method, when the initial guess $L_0$ is significantly smaller than $L_{min}$, the convergence property of the CGS-ls method will not be affected significantly. As an example, if we choose $L_0 = 0.001L_{min}$, then in the convergence result of the above corollary we have $
\sqrt[6]{{\max\{2L_{min},L_0\}}/{L_0}} = \sqrt[6]{2000} \approx 3.5,$
namely, the number of gradient evaluations will be enlarged by a constant of approximately $3.5$. In terms of theoretical convergence, such enlargement will not change the order of the gradient evaluations in terms of its dependence on $1/\varepsilon$. However, in terms of numerical implementation, we have the flexibility of choosing much smaller choice of $L_0$. Moreover, if the smaller choice of $L_0$ is satisfied along the iterates of the CGS-ls method, then we can expect that the practical performance of CGS-ls is much faster than algorithms that use a conservative choice of global Lipschitz constant estimate.

	\section{Numerical results}
In this section we present the results from our numerical experiments. We will compare the performance of the proposed CGS-ls method with that of the conditional gradient (CG) method (described in \eqref{eq:cg} with weights $\lambda_i = 2i/(k(k+1))$) and the CGS method in \cite{lan2016conditional} (parameters follow Corollary 2.3 in the paper). We consider two quadratic optimization problems with different subsets; the first is over the standard spectrahedron and the second is over the convex hull of all Hamiltonian cycles. All numerical experiments are performed on a compute with Intel Core i5 2.7 GHz CPU.

In the first numerical experiment, we consider the optimization problem over the standard spectrahedron:
\begin{align}
	\min_{X\in \text{Spe}_n} f(X):=\frac{1}{2}\norm{\mathcal{A}X-B}_F^2\quad \text{where}\quad \text{Spe}_n:=\{X\in\R^{n\times n}: \text{Tr}(X)=1,\ X\succeq 0 \}.
\end{align}
Here $\mathcal{A}:\R^{n\times n}\to \R^{m}$ is a linear operator, $\|\cdot\|_F$ is the Frobenius norm, and the feasible set $\text{Spe}_{n}$ is a standard spectrahedron. Note that the linear objective optimization over the spectrahedron can be solved by computing a maximum eigenvalue problem (see, e.g., \cite{harchaoui2015conditional}). For each random instance in this numerical experiment, we generate $\mathcal{A}$ first by equivalently generating a $m\times n^2$ matrix with $20\%$, $60\%$ or $80\%$ nonzero entries that follow i.i.d. standard normal distribution. We then generate $B=\mathcal{A}U\Sigma U^\top$ where $U$ is a random orthogonal matrix and $\Sigma$ is a diagonal matrix with uniformly random entries between 0 and 1 (normalized afterwards so that they sum to $1$). Therefore, the optimal value of all generated instances are $0$. 

In the second numerical experiment, we consider the optimization problem over the convex hull of Hamiltonian cycles in \cite{lan2017conditional}:
\begin{align}
	&\min_{x\in \mathcal{H}} f(x):=\frac{1}{2}\norm{Ax-b}_2^2\quad \text{where}\quad \mathcal{H}=\text{conv}\{x\in\R^{n(n-1)/2}: x\text{ is a Hamiltonian cycle} \}.
\end{align}
Here $A:\R^{m\times [n(n-1)/2]}$, $b \in \R^m$, and the feasible set $\mathcal{H}$ is the convex hull of all Hamiltonian cycles in a complete graph with $n$ nodes. We describe any Hamiltonian cycle through a vector of dimension $n(n-1)/2$ (the lower triagular part of the adjacency matrix). Note that the linear objective optimization over $\mathcal{H}$ can be solved by computing the solution to a traveling salesman problem (solved through Gurobi \cite{gurobi}). For each random instance, $A$ is randomly generated with $60\%$ nonzero entries that follow i.i.d. standard uniform distribution. We then generate $b=A(0.8v_1+0.2v_2)$ where $v_1$ and $v_2$ are two Hamiltonian cycles that are generated from random permutations of all nodes. The optimal value of all generated instances are $0$.

\begin{tiny}
	\begin{table}[!t]
		\centering
		\caption{\label{tab:SPE}Comparison of CG, CGS and CGS-ls on the first numerical experiment (minimization over standard spectrahedron).}
		%
		%
		%
		\centering
		
		\begin{tabular}{|c|c|c||c c c||c c c c||c c c c|}
			\hline
			\multicolumn{3}{|c||}{Instance info.} & \multicolumn{3}{c||}{CG} & \multicolumn{4}{c||}{CGS} & \multicolumn{4}{c|}{CGS-ls}\\
			\hline
			m & n & density & iter. & time & obj. & outer & inner & time & obj. & outer & inner & time & obj. \\
			\hline 
			1000 & 100 & .2 & 21908 &  543 & 3e-6 & 264 & 528 & 10 & 3e-6 & 148 &  919 & 13 & 6e-8 \\
			2000 & 100 & .2 & 38861 & 1427 & 3e-6 &  842 & 1871 &  38 & 4e-6 & 232 & 1961 & 26 & 8e-8 \\
			3000 & 100 & .2 & 36402 & 1800 & 5e-6 &  900 & 2368 &  53 & 8e-6 & 219 & 2175 & 26 & 2e-7 \\
			
			\hline
			1000 & 100 & .6 & 45170 & 1800 & 1e-6 & 379 & 758 & 18  & 1e-6 & 307 & 1540 & 24 & 2e-8 \\
			2000 & 100 & .6 & 26855 & 1800 & 1e-5 & 1309 & 2964 &  91 & 2e-6 & 343 & 2578 & 35 & 2e-8 \\
			3000 & 100 & .6 & 18830 & 1800 & 5e-5 & 1410 & 3811 & 138 & 3e-6 & 291 & 2797 & 38 & 1e-7 \\
			
			\hline
			1000 & 100 & .8 & 39448 & 1800 & 1e-6 & 393 & 786 & 21  & 1e-6 & 328 & 1625 & 27 & 1e-8 \\
			2000 & 100 & .8 & 22728 & 1800 & 1e-5 & 1475 & 3361 & 117 & 1e-6 & 446 & 2704 & 42 & 3e-8 \\
			3000 & 100 & .8 & 16055 & 1800 & 8e-5 & 1551 & 4223 & 170 & 3e-6 & 320 & 3360 & 45 & 5e-8 \\
			
			\hline
		\end{tabular}
		

		\ \\
		\vspace{1cm}
		%
		%

		\caption{\label{tab:TSP}Comparison of CG, CGS and CGS-ls on the second numerical experiment (minimization over the convex hull of Hamiltonian cycles).}
		\centering
		\begin{tabular}{|c|c|c||c c c||c c c c||c c c c|}
			\hline
			\multicolumn{3}{|c||}{Instance info.} & \multicolumn{3}{c||}{CG} & \multicolumn{4}{c||}{CGS} & \multicolumn{4}{c|}{CGS-ls}\\
			\hline
			m & n & density & iter. & time & obj. & outer & inner & time & obj. & outer & inner & time & obj. \\
			\hline 
			1e3 & 20 & .6 & 37259 &  596 & 3e-6 &  6246 & 12492 &  197 & 4e-3 & 1108 & 2493 &  39 & 4e-4 \\
			1e3 & 25 & .6 & 48415 &  968 & 1e-5 &  9236 & 18472 &  502 & 4e-3 & 1526 & 3730 &  91 & 5e-4 \\
			1e3 & 30 & .6 & 56254 & 1800 & 1e-5 & 12542 & 25084 &  735 & 4e-3 & 1883 & 4734 & 142 & 4e-4 \\
			1e3 & 35 & .6 & 45355 & 1800 & 2e-5 & 15390 & 30780 & 1323 & 5e-3 & 2210 & 5636 & 233 & 4e-4 \\
			1e3 & 40 & .6 & 27638 & 1800 & 1e-4 & 14445 & 28890 & 1800 & 9e-3 & 2511 & 6335 & 374 & 4e-4 \\
			
			\hline
			1e4 & 15 & .6 & 92429 & 1800 & 4e-6 & 11927 & 23854 &  473 & 5e-3 & 2589 &  6711 & 117 & 7e-4 \\
			1e4 & 20 & .6 & 67756 & 1800 & 1e-5 & 19508 & 39016 &  841 & 5e-3 & 3535 &  8702 & 180 & 3e-4 \\
			1e4 & 25 & .6 & 52912 & 1800 & 8e-5 & 23044 & 46088 & 1800 & 7e-3 & 4529 & 11964 & 364 & 3e-4 \\
			1e4 & 30 & .6 & 31736 & 1800 & 2e-4 & 19272 & 38544 & 1800 & 2e-2 & 5564 & 15378 & 623 & 2e-4 \\
			1e4 & 35 & .6 & 24348 & 1800 & 6e-4 & 14771 & 29542 & 1800 & 5e-2 & 6480 & 17848 & 894 & 2e-4 \\
			
			\hline
			1e5 & 15 & .6 & 18433 & 1800 & 1e-3 & 14924 & 29848 & 1800 & 3e-2 & 8036 & 15825 &  774 & 1e-3 \\
			1e5 & 17 & .6 & 14281 & 1800 & 5e-3 & 12791 & 25582 & 1800 & 7e-2 & 8659 & 18406 &  943 & 5e-4 \\
			1e5 & 20 & .6 &  9715 & 1800 & 3e-3 &  9019 & 18038 & 1800 & 2e-1 & 9277 & 18801 & 1427 & 3e-4 \\
			
			\hline
		\end{tabular}
	\end{table}

	%
	%

\end{tiny}

We report the performance of CG, CGS, and CGS-ls in the above two numerical experiments in Table \ref{tab:SPE} and Table \ref{tab:TSP} respectively. For CG and CGS, we terminate when the Wolfe gap described in \eqref{eq:WolfeGap} is smaller than $0.01$; for CGS-ls, we terminate when the gap describe in \eqref{Alg:CGSLS:minep} is smaller than $0.01$. Consequently, all algorithms terminate either when an approximate solution is certified with $\varepsilon=0.01$ (or when the algorithm runs over 30 minutes). Note that CGS requires the Lipschitz constant of the objective function; we compute them through the maximum eigenvalue of the Hessian of the objective function (the time for computing maximum eigenvalue is not counted towards CGS' computation time). For CGS-ls, we set $L_0=10$ for all instances, and $D=0.005\sqrt{2}$ and $D=0.05\sqrt{n(n-1)/2}$ for the first and second numerical experiments, respectively. We report the running time (in seconds) and the objective value of the approximate solution at termination for all algorithms. For CG, we report its total number of iterations, which is the same as the number of gradient evaluations and linear optimization subproblems. For CGS and CGS-ls, we report the total number of gradient evaluations (denoted ``outer'') and linear optimization subproblems (denoted as ``inner''). 

We make a few remarks from Tables \ref{tab:SPE} and \ref{tab:TSP}. First, among the three algorithms, CG is the simplest to implement but has the worst performance; in most of the instances it could not obtain an approximate solution with Wolfe gap smaller than $0.01$ within the required 30 minute computation time limit. Such behavior is consistent with its theoretical complexity $\MO(L/\varepsilon)$, which is the worst among the three algorithms. Second, CGS-ls has better practical performance than CGS-ls in most instances, although both algorithms have the same theoretical convergence properties. The better practical performance is most likely due to the adaptive estimate of Lipschitz constant, which avoids the potentially conservative Lipschitz constant that CGS may suffer throughout the computation. Finally, it is interesting to observe that most objective values of the approximate solutions computed by all algorithm at termination are much better than the accuracy setup $\varepsilon=0.01$. To the best of our knowledge, it is still unclear in the literature whether there exists termination criterion other than the ones we use in this paper (\eqref{Alg:CGSLS:minep} and \eqref{eq:WolfeGap}) that could guarantee that the approximate solution at termination is an $\varepsilon$-solution while achieving good practical performance. We leave the study of better termination criterion as a future work.

	\begin{acknowledgements}
		Yuyuan Ouyang is partially supported by Office of Naval Research award N00014-20-1-2089.
	\end{acknowledgements}

	%
	%

	\bibliographystyle{spmpsci}      
	\bibliography{yuyuan}   
	
	
\end{document}